\documentclass[11pt]{article}

\usepackage[a4paper,margin=1in]{geometry}
\usepackage[T1]{fontenc}
\usepackage[utf8]{inputenc}
\usepackage{lmodern}
\usepackage{microtype}
\usepackage{amsmath,amssymb,amsthm,mathtools}
\usepackage{booktabs,longtable,array}
\usepackage{enumitem}
\usepackage[hidelinks]{hyperref}

\allowdisplaybreaks

\newtheorem{theorem}{Theorem}[section]
\newtheorem{proposition}[theorem]{Proposition}
\newtheorem{corollary}[theorem]{Corollary}
\newtheorem{lemma}[theorem]{Lemma}
\theoremstyle{definition}
\newtheorem{definition}[theorem]{Definition}
\newtheorem{problem}[theorem]{Problem}
\theoremstyle{remark}
\newtheorem{remark}[theorem]{Remark}

\newcommand{\Supp}{\operatorname{Supp}}
\newcommand{\prof}{\operatorname{prof}}
\newcommand{\mult}{\operatorname{mult}}
\newcommand{\gapv}{\operatorname{gap}}
\newcommand{\degG}{\operatorname{deg}}

\title{Support and Support Jumps in the Partition Graph}
\author{Fedor B. Lyudogovskiy}
\date{}

\begin{document}
\maketitle

\begin{abstract}
Let $G_n$ be the partition graph whose vertices are the partitions of $n$, with adjacency given by elementary transfers of one cell between parts, followed by reordering. In this paper we isolate the support of a partition, that is, the set of distinct part sizes, and treat it as a global invariant of $G_n$.

We introduce support size, ordered support profile, multiplicity pattern, and gap pattern, and show that support size $r$ occurs in $G_n$ if and only if $T_r=r(r+1)/2\le n$. Equivalently, the maximal support size realized in $G_n$ is $\rho(n)=\max\{r:T_r\le n\}$. We then determine exactly how the support changes along an edge of $G_n$: the support jump is always one of
\[
-2,-1,0,1,2,
\]
and an explicit birth--death formula is obtained in terms of the source and target part sizes.

Using the degree formula from the local morphology line, we show that
\[
\degG(\lambda)\ge \sigma(\lambda)(\sigma(\lambda)-1)
\]
for every partition $\lambda$, with equality exactly for staircase partitions. Thus support already captures a universal quadratic baseline for degree. We also prove that support size is invariant under conjugation, identify the support-$1$ stratum with the rectangular partitions, and show that the coarse support-level graph always contains the chain
\[
1-2-\cdots-\rho(n).
\]

Finally, we present an initial computational atlas for small $n$, including support-stratum counts, support-jump counts, and first connectivity data for the induced fixed-support subgraphs. The paper is intended as a bridge between the local block-morphology theory and a later study of jump-type and gradient-type invariants on the partition graph.
\end{abstract}

\noindent\textbf{Keywords.} integer partitions; partition graph; support of a partition; support jumps; degree bounds; conjugation symmetry; support corridors.

\noindent\textbf{MSC 2020.} 05A17, 05C75, 05C90.

\section{Introduction}

In the partition graph $G_n$, the vertices are the partitions of $n$, and two vertices are adjacent when one partition is obtained from the other by an elementary transfer of one cell between parts, followed by reordering. Transfer graphs on partitions, or closely related models, have appeared previously in the study of Gray codes and minimal-change generation for partitions; see, for example, Savage~\cite{Savage1989}, Rasmussen--Savage--West~\cite{RasmussenSavageWest1995}, and the survey of M"utze~\cite{Mutze2023}. A different partition-graph model based on binary words was studied by Bal~\cite{Bal2022}. For the topological side of the same transfer graph considered here, see the recent clique-complex paper~\cite{LyudogovskiyHomotopy2026}. This graph has turned out to admit several mutually interacting geometric descriptions. In earlier parts of the project, local invariants such as degree, local clique number, and local simplex dimension were expressed through local transfer data, while global papers developed the language of the framework, the self-conjugate axis, the central region, simplicial shells, directional anisotropy, and morphogenesis across $n$.

The present paper isolates another natural invariant of a partition vertex, namely its support: the set of distinct part sizes occurring in the partition. At a formal level, the support is already implicit in the block decomposition
\[
\lambda=s_1^{m_1}s_2^{m_2}\cdots s_r^{m_r},
\qquad
s_1>\cdots>s_r\ge 1,
\]
where the $s_i$ are the distinct part sizes and the $m_i$ record their multiplicities. However, in the earlier local theory this information appears only as one component inside a richer transfer-type package. The aim of the present paper is different: we treat support as a primary global invariant in its own right and begin a systematic study of its behaviour across the whole graph $G_n$.

There are several reasons why support deserves such a separate treatment. First, the support size
\[
\sigma(\lambda):=|\Supp(\lambda)|
\]
measures a basic form of combinatorial diversity of the partition. Second, support behaves nontrivially under elementary transfers, and therefore gives rise not only to a vertex statistic but also to an edge statistic. Third, support interacts naturally with several geometric structures already present in the project: staircase and near-staircase behaviour, rectangular families, conjugation symmetry, and the distinction between locally simple and locally complex regions of the graph. Finally, support is a natural precursor to a later theory of jump-type and gradient-type invariants: before studying more elaborate transfer gradients, one should first understand how one of the most basic block parameters changes along edges.

Our first structural result concerns the possible support sizes in $G_n$. If
\[
T_r=\frac{r(r+1)}2
\]
denotes the $r$-th triangular number, then support size $r$ occurs in $G_n$ if and only if $T_r\le n$. Equivalently, the maximal support size realized among partitions of $n$ is
\[
\rho(n):=\max\{r:T_r\le n\}.
\]
Thus the support strata
\[
V_{n,r}:=\{\lambda\vdash n:\sigma(\lambda)=r\}
\]
exist exactly for $1\le r\le \rho(n)$.

The second main result is an exact edgewise support-change theorem. If an edge of $G_n$ is produced by moving one cell from a part of size $x$ to a part of size $y$, then the support can change only through deaths at the sizes $x$ and $y$ and births at the sizes $x-1$ and $y+1$, with one collision correction when the two candidate births coincide. As a consequence, the support jump along an edge is always one of
\[
-2,-1,0,1,2,
\]
and all five values do occur.

A third structural point is that support already controls degree at the first approximation level. Using the degree formula established in the local morphology line, we show that
\[
\degG(\lambda)\ge \sigma(\lambda)(\sigma(\lambda)-1)
\]
for every partition $\lambda$, with equality exactly for staircase partitions. Thus support alone provides a universal quadratic floor for degree, while multiplicity and enlarged support gaps contribute only nonnegative correction terms.

The support is also well behaved under conjugation. If $\lambda'$ denotes the conjugate partition of $\lambda$, then $\sigma(\lambda')=\sigma(\lambda)$. Hence every support stratum $V_{n,r}$ is invariant under the conjugation symmetry of $G_n$. At the low-support end, the stratum $V_{n,1}$ is exactly the family of rectangular partitions $(a^b)$. At the high-support end, the stratum $V_{n,\rho(n)}$ always contains a canonical staircase or near-staircase vertex.

These static and edgewise results lead naturally to a first corridor language. On the one hand, for each $r$ one may consider the induced subgraph on the fixed-support stratum $V_{n,r}$, which records support-preserving motion. On the other hand, one may compress the graph to its support levels and study the coarse support-level graph on
\[
\{1,\dots,\rho(n)\}.
\]
We prove that this coarse level graph is always connected and contains the chain
\[
1-2-\cdots-\rho(n).
\]

Methodologically, the paper is intended as a bridge paper. It does not attempt to replace the finer local transfer theory, and it does not yet develop a full jump-gradient formalism. Instead, it occupies an intermediate position between the two. From the local side, it extracts support from block decomposition and shows that this coarse invariant already carries meaningful structural information. From the future jump side, it introduces the language of support jumps, support-preserving edges, support-expanding and support-contracting transitions, and support corridors.

The paper is organized as follows. In Section~2 we introduce the support language and determine the possible support sizes in $G_n$. Section~3 gives the exact support-change theorem for a single edge. Section~4 studies the relation between support and degree. Section~5 turns to global support distributions, conjugation symmetry, and the first exact descriptions of low- and high-support strata. Section~6 introduces support corridors and the coarse support-level graph. Section~7 gives an initial computational atlas. Section~8 concludes with open problems leading toward the jump/gradient line.

\section{Support language and support strata}

\subsection{Support, support size, and derived block data}

Let $\lambda\vdash n$. Write $\lambda$ in block form as
\[
\lambda=s_1^{m_1}s_2^{m_2}\cdots s_r^{m_r},
\qquad
s_1>s_2>\cdots>s_r\ge 1,
\qquad
m_i\ge 1.
\]

\begin{definition}
For a partition $\lambda\vdash n$, define:
\begin{enumerate}[label=\arabic*.]
\item the \emph{support}
\[
\Supp(\lambda):=\{s\ge 1:\text{ the part size }s\text{ occurs in }\lambda\};
\]
\item the \emph{support size}
\[
\sigma(\lambda):=|\Supp(\lambda)|;
\]
\item the \emph{ordered support profile}
\[
\prof(\lambda):=(s_1,\dots,s_r);
\]
\item the \emph{multiplicity pattern}
\[
\mult(\lambda):=(m_1,\dots,m_r);
\]
\item the \emph{gap pattern}
\[
\gapv(\lambda):=(g_1,\dots,g_{r-1}),
\qquad
g_i:=s_i-s_{i+1}.
\]
\end{enumerate}
\end{definition}

The support and support size are the primary invariants of the paper. The multiplicity and gap patterns are derived refinement data attached to the ordered support profile. For standard background on integer partitions, see Andrews~\cite{Andrews1976} and Stanley~\cite{Stanley2011}.

\begin{definition}
For $n\ge 1$ and $r\ge 1$, define the \emph{support stratum}
\[
V_{n,r}:=\{\lambda\vdash n:\sigma(\lambda)=r\}.
\]
\end{definition}

\subsection{Extremal support}

\begin{lemma}
Let $\lambda\vdash n$, and suppose that $\sigma(\lambda)=r$. Then
\[
n\ge T_r:=\frac{r(r+1)}2.
\]
Moreover, equality holds if and only if
\[
\lambda=(r,r-1,\dots,2,1).
\]
\end{lemma}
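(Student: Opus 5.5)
We work directly with the block form $\lambda=s_1^{m_1}\cdots s_r^{m_r}$, where $s_1>\cdots>s_r\ge1$ and $m_i\ge1$. The argument has two parts: a lower bound on the distinct part sizes, and a lower bound on $n$ obtained by discarding repeated parts.

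First we bound the distinct part sizes from below. The $s_i$ are strictly decreasing positive integers, so a descending induction from $i=r$ gives $s_i\ge r-i+1$ for every $i$. The base case is $s_r\ge1$. The inductive step is $s_{i}\ge s_{i+1}+1$, which holds because the integers are strictly decreasing.

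Next we bound $n$. We write
\[
n=\sum_{i=1}^r m_is_i\;\ge\;\sum_{i=1}^r s_i\;\ge\;\sum_{i=1}^r (r-i+1)=T_r .
\]
The first inequality uses $m_i\ge1$ and $s_i>0$. The second uses the bound from the previous step.

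For the equality case, suppose $n=T_r$. Then both inequalities in the chain must be equalities.
\begin{itemize}
\item Equality in the second inequality, combined with the termwise bounds $s_i\ge r-i+1$, forces $s_i=r-i+1$ for all $i$.
\item Equality in the first inequality reads $\sum_i(m_i-1)s_i=0$. Since every $s_i>0$, this forces $m_i=1$ for all $i$.
\end{itemize}
Hence $\lambda=(r,r-1,\dots,1)$. Conversely, this staircase partition has exactly $r$ distinct parts, so $\sigma(\lambda)=r$, and its sum is $T_r$. This gives both directions of the characterization.

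There is no real obstacle here. The only point requiring care is the equality analysis: both inequalities must be tight simultaneously. Each inequality is a sum of termwise nonnegative differences, so tightness forces every single term to be tight.
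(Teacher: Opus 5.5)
Your proof is correct and follows essentially the same route as the paper: bound $n$ below by the sum of the $r$ distinct part sizes, which is at least $1+2+\cdots+r=T_r$, with equality forcing the support to be exactly $\{1,\dots,r\}$ and every multiplicity to equal $1$. Your version simply makes explicit the termwise inequalities ($s_i\ge r-i+1$ and $m_i\ge 1$) and the tightness argument that the paper's two-line proof leaves implicit.
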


\begin{proof}
A partition with $r$ distinct positive part sizes must have total size at least
\[
1+2+\cdots+r=T_r.
\]
Equality occurs only when the distinct part sizes are exactly $1,2,\dots,r$, each with multiplicity $1$.
\end{proof}

\begin{lemma}
If $n\ge T_r$, then
\[
\lambda^{\max}_{n,r}:=(r+(n-T_r),\,r-1,\,r-2,\,\dots,\,2,\,1)
\]
is a partition of $n$ with support size $r$.
\end{lemma}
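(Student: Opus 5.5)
The plan is a direct verification with three checks: that the listed entries are positive integers in weakly decreasing order, that they sum to $n$, and that exactly $r$ distinct values occur. No earlier result is needed beyond the definition of $T_r$, although the lemma is naturally read as the converse of the previous lemma: together they give that support size $r$ occurs in $G_n$ if and only if $T_r\le n$.

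First, set $e:=n-T_r$; the hypothesis $n\ge T_r$ gives $e\ge 0$. The sequence is $(r+e,\,r-1,\dots,1)$. Its entries are positive integers. It is strictly decreasing: the tail $r-1>r-2>\cdots>1$ is strictly decreasing, and $r+e\ge r>r-1$. Hence it is a genuine partition written in nonincreasing order.

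Second, compute the sum:
\[
(r+e)+\sum_{j=1}^{r-1} j = e+\sum_{j=1}^{r} j = e+T_r = n .
\]
Third, since all $r$ entries are pairwise distinct, each occurs with multiplicity one. Therefore $\Supp(\lambda^{\max}_{n,r})=\{r+e,\,r-1,\dots,1\}$, which has exactly $r$ elements, so $\sigma(\lambda^{\max}_{n,r})=r$.

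The only point needing care is the degenerate case $r=1$. There the tail is empty and the partition is just $(n)$, with $T_1=1\le n$; the three checks above go through unchanged, giving support $\{n\}$ of size $1$. There is no real obstacle. The one substantive observation is that putting all of the excess $e$ on the largest part preserves strict decrease; putting it on a smaller part could create a collision with a neighbouring entry and lower the support size.
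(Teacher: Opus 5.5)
Your proof is correct and takes the same approach as the paper, which simply observes that the parts are distinct and sum to $n$. Your write-up spells out the positivity, the strict decrease via $r+(n-T_r)\ge r>r-1$, and the sum computation that the paper leaves implicit.
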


\begin{proof}
Its parts are distinct and sum to $n$, so its support size is exactly $r$.
\end{proof}

\begin{theorem}
Let $n\ge 1$ and $r\ge 1$. Then $V_{n,r}\neq\varnothing$ if and only if
\[
T_r\le n.
\]
Consequently,
\[
\max_{\lambda\vdash n}\sigma(\lambda)
=
\rho(n):=\max\{r:T_r\le n\}.
\]
\end{theorem}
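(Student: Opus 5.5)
The plan is to derive both directions of the equivalence directly from the two preceding lemmas, and then read off the formula for the maximum.

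For the forward direction, suppose $V_{n,r}\neq\varnothing$. Pick any $\lambda\in V_{n,r}$, so $\lambda\vdash n$ and $\sigma(\lambda)=r$. The first lemma of this subsection applies directly and gives $n\ge T_r$.

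For the converse, suppose $T_r\le n$. The second lemma exhibits the explicit partition $\lambda^{\max}_{n,r}=(r+(n-T_r),r-1,\dots,1)$. One should note that its largest part $r+(n-T_r)$ strictly exceeds $r-1$, because $n-T_r\ge 0$. Hence all parts are distinct and positive, they sum to $n$, and $\sigma=r$. So $\lambda^{\max}_{n,r}\in V_{n,r}$, and the stratum is nonempty.

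For the consequence, observe that $T_r$ is strictly increasing in $r$ and $T_1=1\le n$. The set $\{r\ge 1: T_r\le n\}$ is therefore a nonempty initial segment $\{1,\dots,\rho(n)\}$, and it is finite because $T_r\to\infty$. Since $\sigma(\lambda)\ge 1$ for every $\lambda\vdash n$ with $n\ge1$, the set of support sizes realized in $G_n$ is exactly $\{r: V_{n,r}\neq\varnothing\}$. By the equivalence just proved, this set equals $\{1,\dots,\rho(n)\}$, so its maximum is $\rho(n)$.

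No step presents a real obstacle. The only points requiring care are the monotonicity of $T_r$, which makes the realized set an interval, and the edge case $r-1=0$ in the construction, where $\lambda^{\max}_{n,1}=(n)$ is still valid.
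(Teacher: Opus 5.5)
Your proof is correct and matches the paper's argument: necessity comes from the minimum-total-size lemma $n\ge T_r$, and sufficiency from the explicit witness $\lambda^{\max}_{n,r}$. Your extra remarks on the strict monotonicity of $T_r$ and the degenerate case $r=1$, where $\lambda^{\max}_{n,1}=(n)$, are harmless details that the paper leaves implicit.
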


\begin{proof}
The necessity is the previous lemma on the minimum possible total size for $r$ distinct part sizes. The sufficiency is realized by $\lambda^{\max}_{n,r}$.
\end{proof}

\begin{corollary}
Support size $r$ occurs for the first time at $n=T_r$, and at that value the unique partition with support size $r$ is the staircase
\[
(r,r-1,\dots,2,1).
\]
\end{corollary}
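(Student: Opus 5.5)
The corollary follows directly from the theorem on nonempty strata together with the equality case of the first lemma in this subsection. I will interpret ``occurs for the first time at $n=T_r$'' as two claims: $V_{n,r}=\varnothing$ for every $n<T_r$, and $V_{T_r,r}\neq\varnothing$. Both are exactly the two directions of the criterion $V_{n,r}\neq\varnothing\iff T_r\le n$.

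First, for $n<T_r$ the inequality $T_r\le n$ fails, so the theorem gives $V_{n,r}=\varnothing$. Equivalently, by the first lemma, any partition with $\sigma(\lambda)=r$ has size at least $T_r$.

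Second, at $n=T_r$ the theorem, or directly the second lemma with $n-T_r=0$, shows that $\lambda^{\max}_{T_r,r}=(r,r-1,\dots,2,1)$ lies in $V_{T_r,r}$. For uniqueness, take any $\lambda\vdash T_r$ with $\sigma(\lambda)=r$. Then $n=T_r$ is the equality case of the first lemma, and that lemma forces $\lambda=(r,r-1,\dots,1)$. Hence $V_{T_r,r}$ consists of the staircase alone.

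None of these steps is a real obstacle. The only point needing care is the equality clause of the first lemma: the $r$ distinct part sizes must be exactly $\{1,\dots,r\}$, and every multiplicity must equal $1$. This holds because the $i$-th smallest distinct size is at least $i$, so any larger size or any repeated part strictly increases the total beyond $T_r$. I will recall this justification explicitly so that the uniqueness claim is self-contained.
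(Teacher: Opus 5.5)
Your proposal is correct and follows essentially the same route as the paper, which states this corollary without a separate proof. There it is an immediate consequence of the criterion $V_{n,r}\neq\varnothing \iff T_r\le n$ together with the equality clause of the first extremal-support lemma. Your explicit justification of that clause (the $i$-th smallest distinct size is at least $i$, and any repeated part or larger size strictly increases the total) is the same argument the paper sketches in that lemma's proof.
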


\section{Support change along an edge}

We now turn from support as a static invariant to support as a transition quantity.

\subsection{Transfer language}

Let $\lambda\sim\mu$ be an edge of $G_n$. Then $\mu$ is obtained from $\lambda$ by moving one cell from a part of size $x$ to a part of size $y$, where $y=0$ is allowed and corresponds to creating a new row. Thus, at the multiset level,
\[
\mu
=
\bigl(\lambda\setminus\{x,y\}\bigr)\cup\{x-1,y+1\},
\]
with the obvious conventions when $y=0$ or $x=1$.

\begin{definition}
For an oriented edge $\lambda\to\mu$, define the \emph{support jump}
\[
\Delta_\sigma(\lambda,\mu):=\sigma(\mu)-\sigma(\lambda).
\]
\end{definition}

\subsection{Multiplicity update}

\begin{lemma}
Let $\mu$ be obtained from $\lambda$ by transferring one cell from a part of size $x$ to a part of size $y$, with $y=0$ allowed. Then for every $t\ge 1$,
\[
m_t(\mu)
=
m_t(\lambda)
-\mathbf 1_{t=x}
-\mathbf 1_{t=y}
+\mathbf 1_{t=x-1}
+\mathbf 1_{t=y+1},
\]
where the term $\mathbf 1_{t=y}$ is omitted when $y=0$, and the term $\mathbf 1_{t=x-1}$ is omitted when $x=1$.
\end{lemma}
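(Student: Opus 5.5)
The plan is to work directly with the multiset description of the transfer,
\[
\mu=\bigl(\lambda\setminus\{x,y\}\bigr)\cup\{x-1,y+1\},
\]
and read off multiplicities part size by part size. Write $m_t(\lambda)$ for the number of parts of $\lambda$ equal to $t$, for $t\ge 1$. I will view $\lambda$ as a multiset of positive integers and regard the transfer as two removals followed by two insertions. The removals take out one copy of $x$ and, if $y\ge 1$, one copy of $y$. The insertions add one copy of $x-1$ if $x\ge 2$, and one copy of $y+1$, which is always at least $1$. The convention that parts of size $0$ are not recorded then accounts exactly for the two omitted indicator terms. When $y=0$ no existing part is removed, since a new row is created. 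When $x=1$ the donor part disappears and no part of size $0$ is added.

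The key steps are as follows.

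\textbf{Step 1: additivity.} Multiset union and difference act additively on multiplicity functions. So, for each fixed $t\ge1$,
\[
m_t(\mu)=m_t(\lambda)-\#\{\text{removed elements equal to }t\}+\#\{\text{inserted elements equal to }t\}.
\]

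\textbf{Step 2: validity of the removal.} The removal is legitimate as a multiset operation. This needs $m_x(\lambda)\ge1$ and, if $y\ge1$, $m_y(\lambda)\ge1$. When $x=y$ it needs the stronger condition $m_x(\lambda)\ge 2$. All of this is guaranteed because the transfer is between two distinct parts of $\lambda$.

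\textbf{Step 3: the indicators.} The count of removed elements equal to $t$ is $\mathbf 1_{t=x}+\mathbf 1_{t=y}$, with the second term dropped when $y=0$ (automatically, since $t\ge1$). The count of inserted elements equal to $t$ is $\mathbf 1_{t=x-1}+\mathbf 1_{t=y+1}$, with the first term dropped when $x=1$ (again automatic for $t\ge1$).

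The only point needing care is coincidences among $x,\,y,\,x-1,\,y+1$. Examples are $x=y$, where two copies of $x$ are removed, and $x-1=y+1$, where two equal parts are created. Another is $y=x-1$, where the donor and recipient simply swap sizes and the net change is zero. I will handle these uniformly by noting that indicator counting is additive. Coinciding values then just sum their indicators, so no separate case analysis is needed; one or two sanity checks such as $y=x-1$ will confirm this. Finally, the reordering step in the definition of adjacency does not affect multiplicities. So the formula holds for the partition $\mu$ itself, not merely for the unordered list of parts.
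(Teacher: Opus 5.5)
Your proposal is correct and takes the same route as the paper. The paper simply observes that one part of size $x$ is removed, one part of size $y$ is removed when $y>0$, one part of size $x-1$ is created when $x>1$, and one part of size $y+1$ is created, with no other multiplicities changing; your checks that the removal is valid (including $m_x(\lambda)\ge 2$ when $x=y$) and that coincidences among $x,y,x-1,y+1$ are handled by additive indicator counting are a more careful write-up of that same argument.
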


\begin{proof}
Exactly one part of size $x$ is removed, exactly one part of size $y$ is removed when $y>0$, exactly one part of size $x-1$ is created when $x>1$, and exactly one part of size $y+1$ is created. No other multiplicities change.
\end{proof}

\begin{corollary}
Only the support values among
\[
x,\ y,\ x-1,\ y+1
\]
can be affected by a single elementary transfer.
\end{corollary}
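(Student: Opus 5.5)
This corollary follows directly from the multiplicity update lemma just proved, so the plan is to read it off that formula rather than argue anew. By definition, $\Supp(\mu)=\{t\ge 1: m_t(\mu)>0\}$ and $\Supp(\lambda)=\{t\ge 1: m_t(\lambda)>0\}$. Hence a size $t$ can belong to exactly one of the two supports only if $m_t(\mu)\neq m_t(\lambda)$.

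The key step is to look at the formula
\[
m_t(\mu)=m_t(\lambda)-\mathbf 1_{t=x}-\mathbf 1_{t=y}+\mathbf 1_{t=x-1}+\mathbf 1_{t=y+1}.
\]
It shows that if $t\notin\{x,y,x-1,y+1\}$, then all four indicator terms vanish and $m_t(\mu)=m_t(\lambda)$. For such $t$, membership of $t$ in the support is therefore unchanged. Equivalently, the symmetric difference $\Supp(\lambda)\,\triangle\,\Supp(\mu)$ is contained in $\{x,y,x-1,y+1\}\cap\mathbb Z_{\ge 1}$.

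The only care needed is the degenerate conventions. When $y=0$ the value $y$ is not a part size, and when $x=1$ the value $x-1=0$ is not a part size. In both cases the corresponding indicator is omitted, and the value $0$ lies outside the range $t\ge 1$ in any case. The set of possibly affected sizes only shrinks, so the statement still holds.

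There is no real obstacle. The one point worth flagging is that "affected" should be read as "possibly changed": coincidences such as $x-1=y+1$ or $x=y+1$ can make some of the four candidates cancel, but they never enlarge the set.
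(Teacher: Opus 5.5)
Your proposal is correct and matches the paper, which records this corollary as an immediate consequence of the multiplicity update lemma with no further argument: for $t\notin\{x,y,x-1,y+1\}$ every indicator term vanishes, so $m_t(\mu)=m_t(\lambda)$ and membership of $t$ in the support is unchanged. Your remarks on the conventions $y=0$ and $x=1$ are a harmless extra check.
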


A support value can die only at $x$ or $y$, and can be born only at $x-1$ or $y+1$. The only collision occurs when
\[
x-1=y+1,
\qquad\text{i.e.}\qquad
x=y+2.
\]

\begin{theorem}[Exact support change along an edge]
Let $\lambda\sim\mu$ be an edge of $G_n$, obtained by moving one cell from a part of size $x$ to a part of size $y$, with $y=0$ allowed, and let $S=\Supp(\lambda)$.

If $x\neq y$, then
\[
\Delta_\sigma(\lambda,\mu)
=
\mathbf 1_{x>1,\ x-1\notin S}
+
\mathbf 1_{y+1\notin S}
-
\mathbf 1_{x=y+2,\ x-1\notin S}
-
\mathbf 1_{m_x(\lambda)=1}
-
\mathbf 1_{y>0,\ m_y(\lambda)=1}.
\]

If $x=y$, then necessarily $m_x(\lambda)\ge 2$, and
\[
\Delta_\sigma(\lambda,\mu)
=
\mathbf 1_{x>1,\ x-1\notin S}
+
\mathbf 1_{x+1\notin S}
-
\mathbf 1_{m_x(\lambda)=2}.
\]
\end{theorem}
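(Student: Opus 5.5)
The plan is to compute $\sigma(\mu)-\sigma(\lambda)$ pointwise over part sizes, using the multiplicity update lemma. For every $t\ge 1$ we have $t\in\Supp(\mu)$ iff $m_t(\mu)\ge 1$. By the corollary to that lemma, $m_t(\mu)=m_t(\lambda)$ unless $t\in\{x,y,x-1,y+1\}$. Hence
\[
\Delta_\sigma(\lambda,\mu)=\sum_{t\in C}\bigl(\mathbf 1_{m_t(\mu)\ge1}-\mathbf 1_{m_t(\lambda)\ge1}\bigr),
\]
where $C$ is the set of distinct positive values among $x,y,x-1,y+1$. It then remains to evaluate each summand and check that the total matches the stated indicator expression.

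Before this, I would record the standing non-degeneracy assumption that comes from $\lambda\sim\mu$ being an edge, so that $\mu\neq\lambda$. The transfer with $x=y+1$ replaces $\{y+1,y\}$ by $\{y,y+1\}$ and returns $\lambda$ itself, so we may assume $x\neq y+1$. This assumption is what separates the four candidate values: with $x\neq y$ and $x\neq y+1$, the only coincidence among $x,y,x-1,y+1$ is $x-1=y+1$, i.e.\ $x=y+2$, as noted before the theorem. In particular, a death value ($x$ or $y$) never coincides with a birth value ($x-1$ or $y+1$), so each element of $C$ is purely a candidate death or purely a candidate birth.

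In the case $x\neq y$ I would run through the elements of $C$ as follows.
\begin{enumerate}[label=(\roman*)]
\item At $t=x$ we have $m_x(\mu)=m_x(\lambda)-1$, since no birth lands on $x$. This contributes $-1$ exactly when $m_x(\lambda)=1$.
\item At $t=y$ with $y>0$, similarly $m_y(\mu)=m_y(\lambda)-1$, contributing $-\mathbf 1_{m_y(\lambda)=1}$.
\item At $t=x-1$ with $x>1$, the multiplicity increases, so the contribution is $+1$ exactly when $x-1\notin S$.
\item At $t=y+1$ the contribution is likewise $+\mathbf 1_{y+1\notin S}$.
\end{enumerate}
When $x=y+2$, the two births land on the same value $x-1=y+1$, so that value is counted once and not twice. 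This gives the correction term $-\mathbf 1_{x=y+2,\,x-1\notin S}$; note that $x=y+2$ forces $x\ge 2$, so the condition $x>1$ holds automatically there. Summing these contributions yields the first formula.

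In the case $x=y$, the transfer needs two distinct parts of size $x$, so $m_x(\lambda)\ge2$. The update lemma gives $m_x(\mu)=m_x(\lambda)-2$, so $x$ dies iff $m_x(\lambda)=2$. The births at $x-1$ (when $x>1$) and at $x+1$ are distinct and both differ from $x$, which gives the second formula. The main obstacle is not computational but bookkeeping: I must justify carefully that no value is simultaneously a death site and a birth site. This rests on the exclusion $x\neq y+1$ and on the handling of the conventions $y=0$ and $x=1$, where the corresponding terms simply vanish.
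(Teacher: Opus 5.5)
Your proposal is correct and follows the same birth--death bookkeeping as the paper's proof: deaths at $x$ and $y$, births at $x-1$ and $y+1$, a single collision correction when $x=y+2$, and $m_x(\mu)=m_x(\lambda)-2$ in the case $x=y$. Your explicit exclusion of $x=y+1$ (which gives $\mu=\lambda$, so no edge) is only tacit in the paper, and it is exactly what ensures that no death site coincides with a birth site; without it the stated formula would fail.
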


\begin{proof}
The support jump is the number of births minus the number of deaths. Death at $x$ occurs exactly when $m_x(\lambda)=1$; death at $y$ occurs exactly when $y>0$ and $m_y(\lambda)=1$. Birth at $x-1$ occurs exactly when $x>1$ and $x-1\notin S$; birth at $y+1$ occurs exactly when $y+1\notin S$. If $x\neq y$, the only overcounting is the collision case $x=y+2$. If $x=y$, the original size $x$ disappears exactly when both copies are used up, i.e. when $m_x(\lambda)=2$.
\end{proof}

\begin{corollary}
For every edge of $G_n$,
\[
\Delta_\sigma\in\{-2,-1,0,1,2\}.
\]
\end{corollary}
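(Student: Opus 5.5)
The plan is to read the bound directly off the exact support-change theorem just proved, splitting into the two cases $x\neq y$ and $x=y$. In each case the jump is a signed sum of indicator functions, so it suffices to bound the positive part (births) and the negative part (deaths) separately.

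\emph{Case $x=y$.} The formula has two birth indicators and one death indicator. Hence $\Delta_\sigma\in[-1,2]$ immediately.

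\emph{Case $x\neq y$.} There are two birth indicators, a collision correction, and two death indicators.
\begin{itemize}
\item The net birth contribution $\mathbf 1_{x>1,\,x-1\notin S}+\mathbf 1_{y+1\notin S}-\mathbf 1_{x=y+2,\,x-1\notin S}$ counts the distinct new support values among $\{x-1,y+1\}$. It therefore lies in $\{0,1,2\}$; the correction term exactly prevents double counting when $x-1=y+1$, and it cannot drive the sum negative, since when it fires $y+1=x-1\notin S$ also fires.
\item The death contribution $\mathbf 1_{m_x(\lambda)=1}+\mathbf 1_{y>0,\,m_y(\lambda)=1}$ lies in $\{0,1,2\}$.
\end{itemize}
Hence $\Delta_\sigma\in[-2,2]$. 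Since the jump is an integer, this gives exactly $\{-2,-1,0,1,2\}$.

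The only point needing a moment's care is the nonnegativity and the upper bound of the net birth term in the collision case. This is handled by noting that when $x=y+2$ and $x-1\notin S$, the first two indicators both equal $1$ (using $x=y+2\ge 2>1$), so the net is $1$.

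Showing that all five values occur is not part of this corollary, but if desired, small witnesses such as $(2)\to(1,1)$ (jump $0$) and $(3)\to(2,1)$ (jump $+1$) can be checked by hand.
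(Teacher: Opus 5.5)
Your proof is correct and follows the paper's route: the paper gives no separate argument and treats the corollary as immediate from the exact support-change theorem, since net births come only from $\{x-1,y+1\}$ (at most two) and deaths only from $\{x,y\}$ (at most two, or one when $x=y$). Your explicit check that the collision correction never drives the net birth term negative is a detail the paper leaves implicit.
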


\begin{corollary}
Every value in $\{-2,-1,0,1,2\}$ occurs for suitable edges in suitable graphs $G_n$.
\end{corollary}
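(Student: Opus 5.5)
The plan is to exhibit, for each value $d\in\{-2,-1,0,1,2\}$, one explicit edge in a small graph $G_n$. Each support jump will be read off from the exact support-change theorem above, and then double-checked by listing the supports directly. The search is guided by the birth--death bookkeeping. A jump of $+2$ needs both births $x-1$ and $y+1$, no collision (so $x\neq y+2$), and no deaths. A jump of $-2$ needs both deaths at $x$ and $y$ with no births. The values $0$ and $\pm1$ are then easy to arrange.

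\textbf{Jump $-2$.} Take $\lambda=(2,1)\vdash 3$ and move the cell from the part $1$ to the part $2$, so $x=1$ and $y=2$. Then $\mu=(3)$. Both $1$ and $2$ die, and the only possible birth is at $y+1=3\notin S$. This gives $\Delta_\sigma=1-2=-1$, which is not what we want, so a $y+1$ that is already present is needed. Instead take $\lambda=(3,2,1)\vdash 6$ with $x=1$, $y=2$: then $\mu=(3,3)$. Here $3\in S$, so there is no birth, and deaths occur at $1$ and $2$. Hence $\Delta_\sigma=-2$.

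\textbf{Jump $+2$.} Reversing an edge negates the jump, and every edge of $G_n$ is undirected: $\mu\to\lambda$ is the transfer from a part of size $y+1$ to a part of size $x-1$. So reversing the previous edge gives $(3,3)\to(3,2,1)$ with $\Delta_\sigma=+2$. Checking this via the theorem: the transfer has $x=y=3$ with $m_3=2$, births at $2\notin S$ and $4\notin S$, and death at $3$. That gives $1+1-1=1$. This disagrees, so I will recheck the reversal. Moving one cell from a $3$ to the other $3$ produces $(4,2)$, not $(3,2,1)$. The edge $(3,3)\to(3,2,1)$ is actually a move from the part $3$ to a new part ($y=0$). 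Then $x=3$, $y=0$, with births at $2$ and $1$, no deaths, and no collision since $x\neq y+2$. Hence $\Delta_\sigma=+2$, as required. The general principle is that the reversed edge always realises $-\Delta_\sigma$; I will state it and use it to obtain $+1$ from $-1$ as well.

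\textbf{Jumps $0$ and $\pm1$.} For $-1$, the edge $(2,1)\to(3)$ computed above works. Its reverse gives $+1$. For $0$, take $(2)\to(1,1)$: here $x=2$, $y=0$, and there is a collision at $1$, so the theorem gives $1+1-1-1=0$. Directly, the supports are $\{2\}$ and $\{1\}$.

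The main obstacle is only bookkeeping: correctly identifying the transfer $(x,y)$ that realises a given edge, as the slip above shows. To guard against this, each example will be verified by directly comparing $\Supp(\lambda)$ and $\Supp(\mu)$, not just by applying the formula.
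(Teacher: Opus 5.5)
Your proposal is correct and follows essentially the paper's route of exhibiting one explicit witness edge per value. Your $-2$ edge $(3,2,1)\to(3,3)$ is exactly the paper's, and your edges $(3,3)\to(3,2,1)$, $(2,1)\to(3)$, $(3)\to(2,1)$, $(2)\to(1,1)$ for $+2,-1,+1,0$ all check out; the reversal identity $\Delta_\sigma(\mu,\lambda)=-\Delta_\sigma(\lambda,\mu)$ is only a minor convenience, whereas the paper lists independent examples. In a final write-up, drop the false starts (the $(2,1)\to(3)$ detour under $-2$ and the mistaken $x=y=3$ reading of the reversed edge) and simply list the five edges with their supports.
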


\begin{proof}
Examples are
\[
(4,4,1,1)\to(4,3,2,1)\quad(+2),
\]
\[
(5,5)\to(6,4)\quad(+1),
\]
\[
(4,1)\to(3,2)\quad(0),
\]
\[
(4,2)\to(3,3)\quad(-1),
\]
\[
(3,2,1)\to(3,3)\quad(-2).
\]
\end{proof}

\section{Support versus degree}

We now make precise one of the main bridge phenomena behind the paper: support does not determine the degree, but it provides a universal quadratic baseline for it.

\subsection{Augmented gap data and the imported degree formula}

Let
\[
\lambda=s_1^{m_1}s_2^{m_2}\cdots s_r^{m_r},
\qquad r=\sigma(\lambda).
\]
For degree-theoretic purposes we use the \emph{augmented gap sequence}
\[
\widehat g_i:=s_i-s_{i+1}\qquad (1\le i\le r),
\]
where by convention $s_{r+1}:=0$.

\begin{remark}
This augmented gap convention is finer than the internal gap pattern from Section~2, since it includes the terminal drop from the smallest supported part size down to $0$.
\end{remark}

\begin{proposition}[Imported local degree formula]
\[
\degG(\lambda)
=
r(r-1)
+
\sum_{i=1}^r \mathbf 1_{m_i>1}
+
\sum_{i=1}^r \mathbf 1_{\widehat g_i>1}.
\]
\end{proposition}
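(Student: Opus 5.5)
The plan is to prove the formula directly, by counting the distinct neighbours of $\lambda$ as distinct elementary transfers out of $\lambda$, rather than only quoting it from the local morphology line. By the transfer language of Section~3, every neighbour $\mu$ of $\lambda$ arises as
\[
\mu=(\lambda\setminus\{x,y\})\cup\{x-1,y+1\},
\]
where $x\in S=\Supp(\lambda)$ and $y\in S\cup\{0\}$. The only requirement for the move to exist is that, when $y=x$, there are two copies available, i.e.\ $m_x(\lambda)\ge 2$. So $\degG(\lambda)$ is the number of distinct partitions $\mu\neq\lambda$ produced by such admissible ordered pairs $(x,y)$.

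\textbf{Step 1: which pairs are trivial.} A move returns $\lambda$ itself exactly when the multiset $\{x-1,y+1\}$ equals $\{x,y\}$, which forces $y=x-1$. I will check this against the multiplicity-update lemma. For $y=x-1\ge 1$ it is clear. For $y=0$ with $x=1$, the move simply moves a singleton part into a new row and again gives $\lambda$. Conversely, for every other admissible pair the update lemma shows that $m_x$ or $m_y$ changes. So $\mu\neq\lambda$ precisely when $y\neq x-1$.

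\textbf{Step 2: injectivity.} I must show that distinct nontrivial pairs $(x,y)$ give distinct $\mu$. This is the step I expect to need the most care. The plan is to recover $(x,y)$ from the signed multiplicity difference $m_t(\mu)-m_t(\lambda)$ given by the update lemma.
\begin{itemize}
\item Generically the difference is $-1$ at $x$ and at $y$ (when $y>0$), and $+1$ at $x-1$ (when $x>1$) and at $y+1$.
\item In the collision case $x=y+2$ it is $-1$ at $x$ and $y$, and $+2$ at $y+1$.
\item In the case $x=y$ it is $-2$ at $x$, and $+1$ at $x\pm1$.
\end{itemize}
Because $y\neq x-1$, the negative entries never cancel against positive ones. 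The larger of the two removed sizes is the source $x$, since $x-1$ must be born while $y+1$ may be absorbed. This determines $x$ and $y$. Some care is needed when $y=0$ or $x=1$, where some of these entries are absent, and I will handle those boundary cases separately. If this reconstruction gets messy, the fallback is to compare the largest changed part sizes of $\mu$ and $\lambda$ directly.

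\textbf{Step 3: counting.} Split the admissible pairs by type and then remove the trivial ones.
\begin{itemize}
\item Pairs $x\neq y$ with $x,y\in S$: there are $r(r-1)$ of them.
\item Pairs with $y=0$: there are $r$ of them.
\item Pairs with $y=x$: there are $\sum_i\mathbf 1_{m_i>1}$ of them.
\end{itemize}
From these we subtract the trivial pairs $y=x-1$ with $y\in S\cup\{0\}$. These correspond bijectively to the indices $i$ with $\widehat g_i=1$, where the index $i=r$ with $s_r=1$ accounts for $y=0$. Hence
\[
\degG(\lambda)=r(r-1)+r+\sum_i\mathbf 1_{m_i>1}-\sum_i\mathbf 1_{\widehat g_i=1}.
\]
Since $\widehat g_i\ge 1$ for every $i$, we have $r-\sum_i\mathbf 1_{\widehat g_i=1}=\sum_i\mathbf 1_{\widehat g_i>1}$, which gives the stated formula.
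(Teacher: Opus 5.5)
Your Steps 1 and 3 are correct, and the count $r(r-1)+r+\sum_i\mathbf 1_{m_i>1}-\sum_i\mathbf 1_{\widehat g_i=1}$ does give the formula. The paper does not prove this proposition; it imports it from the earlier local-morphology work, so a direct count is a genuinely different, self-contained route. The gap is in Step 2, at the point you flagged. The claim that the larger of the two removed sizes is the source $x$ is false. Ordered pairs with $x<y$, where a cell moves from a smaller part onto a larger one, are genuine moves, and you count them in $r(r-1)$. For $\lambda=(5,2)$, the moves $(x,y)=(5,2)$ and $(x,y)=(2,5)$ give $(4,3)$ and $(6,1)$. Both have removed sizes $\{5,2\}$, and your rule reconstructs $(5,2)$ from both. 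Since there is no cancellation, the negative entries of $m(\mu)-m(\lambda)$ already separate pairs with different unordered removed sets. So this $(a,b)$-versus-$(b,a)$ ambiguity is the only real content of injectivity, and your argument as written does not resolve it.

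The repair is short. Let the removed sizes be $a>b\ge 1$. The ordering $(a,b)$ gives born sizes $\{a-1,b+1\}$, all at most $a$, while the ordering $(b,a)$ gives the born size $a+1>a$. If $a=b$ there is only one ordering. The case $y=0$ is detected by there being a single removed unit rather than two. A cleaner alternative is to pass to column lengths: $\mu'_k=\lambda'_k-\mathbf 1_{k=x}+\mathbf 1_{k=y+1}$ for all $k\ge 1$. Hence $\mu=\lambda$ exactly when $x=y+1$. Otherwise $\mu'-\lambda'$ has a single $-1$ at position $x$ and a single $+1$ at position $y+1$, which recovers $(x,y)$ directly. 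This proves your Steps 1 and 2 together, and the boundary cases $x=1$ and $y=0$ need no separate treatment.
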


Thus
\[
\degG(\lambda)
=
\sigma(\lambda)(\sigma(\lambda)-1)
+
E_{\mathrm{mult}}(\lambda)
+
E_{\mathrm{gap}}(\lambda),
\]
where both correction terms are nonnegative.

\begin{corollary}
For every partition $\lambda\vdash n$,
\[
\degG(\lambda)\ge \sigma(\lambda)(\sigma(\lambda)-1).
\]
\end{corollary}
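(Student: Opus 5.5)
The corollary follows directly from the imported local degree formula (the Proposition immediately above), so the plan is a short deduction rather than a new argument. Writing $\lambda=s_1^{m_1}\cdots s_r^{m_r}$ with $r=\sigma(\lambda)$, the formula splits $\degG(\lambda)$ as
\[
\degG(\lambda)=\sigma(\lambda)(\sigma(\lambda)-1)+E_{\mathrm{mult}}(\lambda)+E_{\mathrm{gap}}(\lambda),
\]
with
\[
E_{\mathrm{mult}}(\lambda)=\sum_{i=1}^r\mathbf 1_{m_i>1},\qquad E_{\mathrm{gap}}(\lambda)=\sum_{i=1}^r\mathbf 1_{\widehat g_i>1}.
\]
Each correction term is a finite sum of indicator functions, each of which is $0$ or $1$, so both are nonnegative integers. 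Dropping them gives $\degG(\lambda)\ge\sigma(\lambda)(\sigma(\lambda)-1)$ immediately.

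Since the abstract also announces the equality case, I would record it in the same breath. Equality holds exactly when both correction terms vanish. The multiplicity term vanishes iff $m_i=1$ for all $i$, i.e.\ all parts are distinct. The gap term vanishes iff every augmented gap satisfies $\widehat g_i\le 1$. Because the $s_i$ are strictly decreasing, each $\widehat g_i\ge 1$, so this forces $\widehat g_i=1$ for all $i$. With the convention $s_{r+1}=0$, the conditions $\widehat g_r=1,\dots,\widehat g_1=1$ give $s_r=1$, $s_{r-1}=2$, and so on up to $s_1=r$. So $\lambda$ is the staircase $(r,r-1,\dots,1)$. Conversely, the staircase visibly kills both terms.

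There is no real obstacle here. The only point needing care is that the nonnegativity rests entirely on the imported Proposition, which the paper takes from the local morphology line, so the corollary is only as solid as that formula. As a sanity check I would verify it on a small case, e.g.\ $(2,1)$. It has support $2$ and should have degree $2$, and indeed its neighbours in $G_3$ are $(3)$ and $(1,1,1)$, consistent with the formula's value $2\cdot 1+0+0=2$.
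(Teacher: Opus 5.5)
Your proof is correct and matches the paper's argument: the corollary is read off from the imported degree formula by dropping the two nonnegative indicator sums $E_{\mathrm{mult}}(\lambda)$ and $E_{\mathrm{gap}}(\lambda)$. Your equality-case analysis likewise agrees with the paper's subsequent proposition characterizing the staircase partitions.
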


\begin{proposition}
For a partition $\lambda$, the following are equivalent:
\begin{enumerate}[label=\arabic*.]
\item
\[
\degG(\lambda)=\sigma(\lambda)(\sigma(\lambda)-1);
\]
\item
\[
m_i=1 \text{ for all }i,
\qquad
\widehat g_i=1 \text{ for all }i;
\]
\item $\lambda$ is a staircase partition:
\[
\lambda=(r,r-1,\dots,2,1),
\qquad r=\sigma(\lambda).
\]
\end{enumerate}
\end{proposition}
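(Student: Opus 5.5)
The plan is to prove the equivalences in the cyclic order $1\Rightarrow 2\Rightarrow 3\Rightarrow 1$, with the imported local degree formula as the only input. By that formula,
\[
\degG(\lambda)-\sigma(\lambda)(\sigma(\lambda)-1)=E_{\mathrm{mult}}(\lambda)+E_{\mathrm{gap}}(\lambda),
\]
and this is a sum of $2r$ indicator terms, each equal to $0$ or $1$.

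For $1\Rightarrow 2$: if equality holds, the sum of these nonnegative indicators is $0$, so every indicator vanishes. Hence $m_i\le 1$ for all $i$, and since $m_i\ge 1$ by the block form, $m_i=1$. Likewise $\widehat g_i\le 1$. Because $s_1>\dots>s_r\ge 1>0=s_{r+1}$, every augmented gap satisfies $\widehat g_i\ge 1$, so $\widehat g_i=1$.

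For $2\Rightarrow 3$: all augmented gaps equal $1$ and $s_{r+1}=0$. A downward induction, or equivalently telescoping the sums $s_i=\sum_{j\ge i}\widehat g_j$, gives $s_i=r+1-i$. Together with all multiplicities equal to $1$, this means $\lambda=(r,r-1,\dots,1)$, and $r=\sigma(\lambda)$ by definition.

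For $3\Rightarrow 1$: for the staircase, every multiplicity is $1$ and every augmented gap is $1$, the last one being $1-0$. Both correction terms therefore vanish, and the formula gives $\degG=r(r-1)$.

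There is no real obstacle in this argument. The only point needing care is the terminal augmented gap $\widehat g_r=s_r$. This is exactly what forces the smallest part to be $1$; without it, a partition such as $(3,2)$ would have all internal gaps equal to $1$ but would not be a staircase. The proof should state explicitly that $\widehat g_r=1$ means $s_r=1$.
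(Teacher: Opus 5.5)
Your proof is correct and follows the same route as the paper: both use the imported degree formula, observe that equality forces every binary correction indicator to vanish, and then telescope the unit augmented gaps downward from $s_{r+1}=0$ to get $s_r=1$, $s_{r-1}=2$, and so on. Your version is only more explicit, for instance in noting that $m_i\ge 1$ and $\widehat g_i\ge 1$ hold automatically and that the terminal gap is what forces $s_r=1$.
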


\begin{proof}
Equality holds exactly when both correction terms vanish. Then $s_r=1$, $s_{r-1}=2$, and so on, hence the support is $\{r,r-1,\dots,1\}$, and all multiplicities are $1$.
\end{proof}

\begin{corollary}
If $\sigma(\lambda)=r$ and $|\lambda|>T_r$, then
\[
\degG(\lambda)\ge r(r-1)+1.
\]
\end{corollary}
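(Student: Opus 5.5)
The plan is to combine the imported degree formula with the equality characterization from the preceding proposition, and then use the fact that degrees are integers. Write
\[
\degG(\lambda)=r(r-1)+E_{\mathrm{mult}}(\lambda)+E_{\mathrm{gap}}(\lambda),
\]
with $r=\sigma(\lambda)$. Both correction terms are sums of indicator functions, so they are nonnegative integers. It therefore suffices to show that $E_{\mathrm{mult}}(\lambda)+E_{\mathrm{gap}}(\lambda)\neq 0$ whenever $|\lambda|>T_r$. Once that is established, the sum is at least $1$, and the stated bound follows at once.

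To rule out vanishing, I argue by contraposition. Suppose both correction terms are zero. By the equivalence of conditions 1 and 2 in the preceding proposition, all $m_i=1$ and all $\widehat g_i=1$. I then read off the support directly, as in the proof of that proposition. From $\widehat g_r=s_r-0=1$ we get $s_r=1$. Then $\widehat g_{r-1}=1$ gives $s_{r-1}=2$, and inductively $s_i=r+1-i$. Together with $m_i=1$ for all $i$, this gives $\lambda=(r,r-1,\dots,1)$, and hence $|\lambda|=T_r$. This contradicts the hypothesis $|\lambda|>T_r$.

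As a cross-check, the first lemma of Section~2 states that $|\lambda|=T_r$ holds exactly for the staircase. So the hypothesis $|\lambda|>T_r$ says precisely that $\lambda$ is not the staircase, and condition 3 of the proposition fails. The equivalence $1\Leftrightarrow 3$ then gives $\degG(\lambda)\neq r(r-1)$. Combining this with the lower bound in the preceding corollary and integrality yields $\degG(\lambda)\ge r(r-1)+1$.

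There is no real obstacle here. The only point needing care is the integrality step: it is what upgrades a strict inequality to the bound $+1$, and it holds because the correction terms are counts of indicators.
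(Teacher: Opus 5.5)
Your proposal is correct and follows essentially the same route as the paper, which leaves this corollary as an immediate consequence of the equality characterization. Since $|\lambda|>T_r$ rules out the staircase, the correction $E_{\mathrm{mult}}(\lambda)+E_{\mathrm{gap}}(\lambda)$ is nonzero, and as a nonnegative integer it is therefore at least $1$.
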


Formula above shows that support gives the \emph{quadratic skeleton}
\[
\sigma(\lambda)(\sigma(\lambda)-1),
\]
while repeated part sizes and widened support gaps contribute only binary correction terms. In this sense, support is already a meaningful coarse organizer of the degree landscape.

\section{Support distributions, conjugation, and basic localization}

\subsection{Support enumerators}

Let
\[
a_{n,r}:=|V_{n,r}|
\]
and define
\[
S_n(u):=\sum_{\lambda\vdash n}u^{\sigma(\lambda)}
=\sum_{r=1}^{\rho(n)}a_{n,r}u^r.
\]

\begin{proposition}
\[
\sum_{r=1}^{\rho(n)}a_{n,r}=p(n),
\]
where $p(n)$ is the partition number.
\end{proposition}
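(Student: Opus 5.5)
The plan is to read the identity as a statement that the support strata partition the vertex set of $G_n$. No generating-function manipulation should be needed; the argument is purely a counting of a disjoint union.

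First, every partition $\lambda\vdash n$ has a well-defined support size $\sigma(\lambda)\ge 1$, because $n\ge 1$ forces at least one part. So $\lambda$ lies in exactly one stratum, namely $V_{n,\sigma(\lambda)}$. Second, the possible values of $\sigma(\lambda)$ lie in $\{1,\dots,\rho(n)\}$. The upper bound is the theorem of Section~2, which gives $\max_{\lambda\vdash n}\sigma(\lambda)=\rho(n)$. Equivalently, the lemma $n\ge T_{\sigma(\lambda)}$ gives $\sigma(\lambda)\le\rho(n)$.

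From these two facts we obtain the disjoint decomposition
\[
\{\lambda\vdash n\}=\bigsqcup_{r=1}^{\rho(n)} V_{n,r}.
\]
The strata are pairwise disjoint because $\sigma$ is a function, and they cover because of the range statement above. Taking cardinalities gives
\[
p(n)=\sum_{r=1}^{\rho(n)}|V_{n,r}|=\sum_{r=1}^{\rho(n)}a_{n,r}.
\]
This is the same as evaluating $S_n(1)$. Strata with $r>\rho(n)$ are empty by the same theorem, so truncating the sum at $\rho(n)$ loses nothing.

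There is no real obstacle here. The only point requiring care is the range $1\le\sigma(\lambda)\le\rho(n)$, and this is supplied entirely by the extremal-support theorem already proved.
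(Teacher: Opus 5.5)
Your proof is correct and matches the paper's reasoning. The paper states this proposition without proof, as an immediate consequence of $\sigma$ splitting the partitions of $n$ into the disjoint strata $V_{n,r}$, with $1\le r\le\rho(n)$ guaranteed by the extremal-support theorem via $n\ge T_{\sigma(\lambda)}$, which is exactly what you wrote out.
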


\subsection{Conjugation preserves support size}

\begin{proposition}
Let
\[
\lambda=s_1^{m_1}\cdots s_r^{m_r},
\qquad
M_j:=m_1+\cdots+m_j.
\]
Then
\[
\Supp(\lambda')=\{M_1,\dots,M_r\}.
\]
In particular,
\[
\sigma(\lambda')=\sigma(\lambda).
\]
\end{proposition}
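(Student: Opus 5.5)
We plan to compute the conjugate directly from the block form, using the column description $\lambda'_j=\#\{i:\lambda_i\ge j\}$. With $\lambda=s_1^{m_1}\cdots s_r^{m_r}$ and $s_1>\cdots>s_r\ge 1$, the number of parts of size at least $j$ is a step function of $j$. Set $s_{r+1}:=0$, as in the augmented gap convention of Section~4. For $s_{k+1}<j\le s_k$ with $1\le k\le r$, the parts of size $\ge j$ are exactly those in the blocks $s_1,\dots,s_k$. Hence
\[
\lambda'_j=m_1+\cdots+m_k=M_k .
\]
For $j>s_1$ we get $\lambda'_j=0$.

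The key steps run in order. First, we partition the index range $1\le j\le s_1$ into the $r$ consecutive intervals $(s_{k+1},s_k]$. Each interval is nonempty because $s_k>s_{k+1}$, and it has length $s_k-s_{k+1}=\widehat g_k\ge 1$. Second, on the $k$-th interval $\lambda'$ takes the constant value $M_k$, so the multiset of parts of $\lambda'$ is
\[
\lambda'=M_r^{\,\widehat g_r}\,M_{r-1}^{\,\widehat g_{r-1}}\cdots M_1^{\,\widehat g_1}.
\]
Third, this gives $\Supp(\lambda')=\{M_1,\dots,M_r\}$, since every value $M_k$ is attained on a nonempty interval and no other positive value occurs. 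Fourth, because every $m_i\ge1$, the partial sums satisfy $M_1<M_2<\cdots<M_r$. So these $r$ values are pairwise distinct, and $\sigma(\lambda')=r=\sigma(\lambda)$.

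The only point requiring care, and the expected main obstacle, is making sure that each interval is genuinely nonempty and that the $M_k$ are genuinely distinct. If either failed, some $M_k$ could be missing or two blocks could merge. Both facts follow immediately from the strict inequalities in the block form and from $m_i\ge 1$, so we expect the argument to be short. As a by-product it also yields $\mult(\lambda')=(\widehat g_r,\dots,\widehat g_1)$, which exhibits conjugation as exchanging the multiplicity data with the augmented gap data.
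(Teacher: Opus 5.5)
Your proposal is correct and follows essentially the same route as the paper, which also identifies the distinct column heights of the Ferrers diagram with the partial sums $M_1,\dots,M_r$. Your version just spells out the steps the paper leaves implicit: the decomposition of $1\le j\le s_1$ into the nonempty intervals $(s_{k+1},s_k]$, and the strict increase $M_1<\cdots<M_r$.
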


\begin{proof}
If
\[
\lambda=s_1^{m_1}\cdots s_r^{m_r},
\]
then the distinct column heights in the Ferrers diagram are precisely the partial sums $M_1,\dots,M_r$, which are therefore the distinct part sizes of $\lambda'$.
\end{proof}

\begin{corollary}
Each support stratum $V_{n,r}$ is invariant under conjugation.
\end{corollary}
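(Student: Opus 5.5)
The corollary is a direct consequence of the preceding proposition, which gives $\sigma(\lambda')=\sigma(\lambda)$. The plan is as follows.

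First, note that conjugation $\lambda\mapsto\lambda'$ is a well-defined map on partitions of $n$. Transposing the Ferrers diagram preserves the number of cells, so $|\lambda'|=|\lambda|=n$. Hence conjugation sends the vertex set of $G_n$ to itself.

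Second, fix $r$ and take $\lambda\in V_{n,r}$. By the preceding proposition, $\sigma(\lambda')=\sigma(\lambda)=r$, so $\lambda'\in V_{n,r}$. This gives the inclusion $(V_{n,r})'\subseteq V_{n,r}$.

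Third, conjugation is an involution: $(\lambda')'=\lambda$, since transposing the diagram twice returns it. So the map is a bijection of the set of partitions of $n$, and its restriction to the finite set $V_{n,r}$ is an injective self-map, hence a bijection. Equivalently, every $\mu\in V_{n,r}$ equals $(\mu')'$ with $\mu'\in V_{n,r}$. Therefore $(V_{n,r})'=V_{n,r}$, which is the stated invariance.

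If one also wants invariance under the conjugation symmetry of $G_n$ as a graph automorphism, as mentioned in the introduction, one can assume that automorphism property from the earlier line of the project. No new work is needed for the stratum statement itself. The only step needing any care is that the map is onto $V_{n,r}$, and the involution property settles it.
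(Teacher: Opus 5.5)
Your proposal is correct and matches the paper, which treats the corollary as immediate from $\sigma(\lambda')=\sigma(\lambda)$ with no further argument. Your added check that conjugation is an involution, so $V_{n,r}$ is mapped onto itself and not merely into itself, is a welcome bit of care the paper leaves implicit.
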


\subsection{The rectangle stratum}

\begin{proposition}
For a partition $\lambda\vdash n$, the following are equivalent:
\begin{enumerate}[label=\arabic*.]
\item $\sigma(\lambda)=1$;
\item $\lambda=(a^b)$ for some positive integers $a,b$ with $ab=n$.
\end{enumerate}
Consequently,
\[
a_{n,1}=d(n),
\]
the number of positive divisors of $n$.
\end{proposition}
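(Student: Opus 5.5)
The plan is to prove the two implications directly from the definition of support and then count.

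For $(1)\Rightarrow(2)$: suppose $\sigma(\lambda)=1$. Then the block form of $\lambda$ has a single block, $\lambda=s_1^{m_1}$. Set $a:=s_1$ and $b:=m_1$. Both are positive integers, and $ab=|\lambda|=n$.

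For $(2)\Rightarrow(1)$: if $\lambda=(a^b)$ with $a,b\ge1$, then every part equals $a$. Hence $\Supp(\lambda)=\{a\}$ and $\sigma(\lambda)=1$.

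For the count, I would write down the map
\[
V_{n,1}\to\{a\ge1:\ a\mid n\},\qquad (a^b)\mapsto a,
\]
and show it is a bijection.
\begin{itemize}
\item It is well defined, because $ab=n$ forces $a\mid n$.
\item It is injective, because $b=n/a$ is determined by $a$.
\item It is surjective, because for any divisor $a$ of $n$ the partition $(a^{n/a})$ is a partition of $n$ lying in $V_{n,1}$.
\end{itemize}
Therefore $a_{n,1}=|V_{n,1}|=d(n)$.

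There is no real obstacle here. The only point needing care is that distinct pairs $(a,b)$ give distinct partitions; this is immediate because a partition is determined by its multiset of parts. One could also remark, via the conjugation proposition above, that $(a^b)'=(b^a)$, which is consistent with $V_{n,1}$ being conjugation-invariant and with the divisor involution $a\mapsto n/a$. This remark is not needed for the proof.
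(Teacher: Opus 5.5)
Your proof is correct. It takes essentially the same approach as the paper, which states this proposition without a written proof and treats it as immediate from the one-block form $\lambda=s_1^{m_1}$ together with the correspondence $(a^b)\leftrightarrow a\mid n$; your bijection onto the divisors of $n$ simply makes that correspondence explicit.
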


\begin{corollary}
A support-$1$ partition is self-conjugate if and only if it is a square:
\[
\lambda=(a^a),
\qquad n=a^2.
\]
\end{corollary}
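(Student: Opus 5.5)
By the preceding proposition on the rectangle stratum, a support-$1$ partition of $n$ has the form $\lambda=(a^b)$ with $a,b\ge 1$ and $ab=n$. So the plan is to compute the conjugate of such a $\lambda$ explicitly and compare it with $\lambda$.

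For the conjugate we use the conjugation proposition with $r=1$, $s_1=a$ and $m_1=b$. It gives $\Supp(\lambda')=\{M_1\}=\{b\}$, so $\lambda'$ also has support size $1$ and its only part size is $b$. Since $|\lambda'|=|\lambda|=ab$, the number of parts of $\lambda'$ is $ab/b=a$. Hence $\lambda'=(b^a)$. The same conclusion follows directly from the Ferrers diagram: the $b\times a$ block of cells has exactly $a$ columns, each of height $b$.

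Now compare the two. Two partitions are equal precisely when they have the same multiset of parts. The partition $(a^b)$ consists of $b$ copies of $a$, and $(b^a)$ consists of $a$ copies of $b$. These multisets coincide if and only if $a=b$: the unique part sizes must agree, and then the multiplicities agree automatically. Thus $\lambda$ is self-conjugate exactly when $\lambda=(a^a)$, and in that case $n=ab=a^2$. Conversely, $(a^a)$ is clearly self-conjugate by the same computation.

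We should also note the consequence that a self-conjugate support-$1$ vertex exists only when $n$ is a perfect square, and then it is unique. No step here is a genuine obstacle. The only point that needs care is identifying the number of parts of $\lambda'$, and this follows from size preservation once the conjugation proposition has fixed the single part size.
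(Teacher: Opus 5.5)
Your proof is correct and follows the same route the paper implicitly relies on (it gives no separate proof): a support-$1$ partition is a rectangle $(a^b)$, its conjugate is $(b^a)$, and these coincide exactly when $a=b$, forcing $n=a^2$. Getting the part count of $\lambda'$ from size preservation after the conjugation proposition fixes its single part size is a clean way to pin down $\lambda'=(b^a)$.
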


\subsection{Support-maximal witnesses}

\begin{proposition}
Let $r=\rho(n)$, and write
\[
n=T_r+t,\qquad t\ge 0.
\]
Then
\[
\lambda_n^{\max}:=(r+t,r-1,r-2,\dots,2,1)
\]
belongs to $V_{n,r}$.
\end{proposition}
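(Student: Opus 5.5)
This is essentially the earlier lemma on $\lambda^{\max}_{n,r}$ specialised to $r=\rho(n)$. The plan is first to note that $r=\rho(n)$ satisfies $T_r\le n$ by the definition $\rho(n)=\max\{r:T_r\le n\}$. Hence $t=n-T_r$ is a nonnegative integer, and $\lambda_n^{\max}$ coincides with $\lambda^{\max}_{n,r}$ as defined in Section~2.

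I would then verify directly, without leaning on the earlier lemma, that $\lambda_n^{\max}$ is a partition of $n$ with exactly $r$ distinct part sizes. The entries are $r+t,r-1,\dots,1$. Since $t\ge 0$, we have $r+t\ge r>r-1$, so the sequence is strictly decreasing and consists of positive integers. The sum is $(r+t)+(r-1)+\cdots+1=T_r+t=n$, so $\lambda_n^{\max}\vdash n$. All $r$ parts are distinct, so $\Supp(\lambda_n^{\max})=\{r+t,r-1,\dots,1\}$ has cardinality $r$. Therefore $\sigma(\lambda_n^{\max})=r$, i.e.\ $\lambda_n^{\max}\in V_{n,\rho(n)}$.

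The only degenerate case to check is $r=1$, where $n<T_2=3$. There the partition is just $(1+t)=(n)$, which has support size $1$, consistent with the general formula. No step is a genuine obstacle. The only point requiring care is ensuring that the top part stays strictly larger than the second, and this follows from $t\ge 0$.
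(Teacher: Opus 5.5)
Your proposal is correct and follows the paper's approach. The paper gives no separate proof here, since the statement is the earlier lemma on $\lambda^{\max}_{n,r}$ specialised to $r=\rho(n)$, where $T_r\le n$ holds by the definition of $\rho$. That lemma's proof is exactly your observation that the parts are distinct and sum to $n$.
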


Thus every $G_n$ contains a canonical support-maximal staircase or near-staircase vertex.

\section{Support corridors and jump structure}

We now pass from support distributions to the geometry of paths and transitions.

\subsection{Support-preserving subgraphs and the support-level graph}

For $1\le r\le \rho(n)$, let
\[
G_n^{[r]}
\]
be the induced subgraph of $G_n$ on $V_{n,r}$.

\begin{definition}
The \emph{support-level graph} $\mathcal L_n^\sigma$ is the graph with vertex set
\[
\{1,2,\dots,\rho(n)\},
\]
in which $r$ and $s$ are adjacent if there exists an edge $\lambda\mu\in E(G_n)$ such that
\[
\sigma(\lambda)=r,\qquad \sigma(\mu)=s.
\]
\end{definition}

\begin{proposition}
If $r$ and $s$ are adjacent in $\mathcal L_n^\sigma$, then
\[
|r-s|\le 2.
\]
\end{proposition}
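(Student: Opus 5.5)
The claim follows directly from the corollary to the exact support-change theorem of Section~3, which states that $\Delta_\sigma\in\{-2,-1,0,1,2\}$ along every edge of $G_n$. The plan is to unpack the definition of adjacency in $\mathcal L_n^\sigma$ and then apply that corollary to the witnessing edge.

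Suppose $r$ and $s$ are adjacent in $\mathcal L_n^\sigma$. By definition there is an edge $\lambda\mu\in E(G_n)$ with $\sigma(\lambda)=r$ and $\sigma(\mu)=s$. Orient this edge as $\lambda\to\mu$; either orientation works, since a transfer from a part of size $x$ to a part of size $y$ is undone by a transfer from the created part $y+1$ to the created part $x-1$. Then
\[
s-r=\sigma(\mu)-\sigma(\lambda)=\Delta_\sigma(\lambda,\mu)\in\{-2,-1,0,1,2\},
\]
so $|r-s|\le 2$.

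There is essentially no obstacle. The only point to check is that the bound on $\Delta_\sigma$ really holds in all cases of the birth--death formula. It does: there are at most two births (at $x-1$ and $y+1$) and at most two deaths (at $x$ and $y$), so the jump lies in $[-2,2]$. In the case $x=y$ there is at most one death, giving the tighter range $[-1,2]$.
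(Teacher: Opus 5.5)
Your proof is correct and is exactly the paper's argument: take the edge of $G_n$ witnessing the adjacency of $r$ and $s$ in $\mathcal L_n^\sigma$, and apply the edgewise bound $\Delta_\sigma\in\{-2,-1,0,1,2\}$. Your extra count of at most two births and at most two deaths (at most one death when $x=y$) is a valid justification of that bound, which the paper states as a corollary without a separate proof.
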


\begin{proof}
This is the edgewise support-jump bound applied to an edge realizing that level transition.
\end{proof}

\subsection{The support-$1$ stratum}

The rectangle stratum is almost edgeless, but not quite.

\begin{proposition}
The induced subgraph $G_n^{[1]}$ consists of a single edge for $n=2$, namely
\[
(2)\sim(1,1),
\]
and is edgeless for all $n\ge 3$.
\end{proposition}

\begin{proof}
Every support-$1$ partition is a rectangle $(a^b)$. If $n=2$, the two rectangles are $(2)$ and $(1,1)$, and they are adjacent.

Assume now $n\ge 3$. Any nontrivial elementary transfer from a rectangle creates at least two distinct part sizes, so no edge joins two support-$1$ vertices.
\end{proof}

\subsection{Internal edges in higher strata}

\begin{proposition}
Let $r\ge 2$ and $n>T_r$. Then $G_n^{[r]}$ contains an edge.
\end{proposition}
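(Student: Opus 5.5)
The plan is to write down one explicit edge inside $V_{n,r}$, built from the support-maximal witness of the lemma defining $\lambda^{\max}_{n,r}$, and to check with the exact support-change theorem that the transfer preserves support size.

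Write $n=T_r+t$ with $t\ge 1$, and take
\[
\lambda=\lambda^{\max}_{n,r}=(r+t,\,r-1,\,r-2,\,\dots,\,2,\,1).
\]
By the earlier lemma, $\lambda\in V_{n,r}$. Its support is
\[
S=\{r+t,\,r-1,\,\dots,\,1\},
\]
and every multiplicity equals $1$.

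Apply the elementary transfer that moves one cell from the largest part $x=r+t$ to a new row, so $y=0$. This gives
\[
\mu=(r+t-1,\,r-1,\,\dots,\,2,\,1,\,1)\vdash n,
\]
and $\lambda\sim\mu$ is an edge of $G_n$ by definition. Now evaluate the formula in the theorem on exact support change, case $x\neq y$, term by term.

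\begin{enumerate}[label=\arabic*.]
\item Birth at $x-1=r+t-1$. Since $r\ge 2$, we have $x>1$. Since $t\ge 1$, we have $r+t-1\ge r>r-1$. It is also distinct from $r+t$, so $x-1\notin S$. This term contributes $+1$.
\item Birth at $y+1=1$. Because $1\in S$, this term contributes $0$.
\item Collision correction. The case $x=y+2$ would force $r+t=2$. This is impossible because $r\ge 2$ and $t\ge 1$, so the term is $0$.
\item Death at $x$. Since $m_x(\lambda)=1$, this term contributes $-1$.
\item Death at $y$. Since $y=0$, this term is absent.
\end{enumerate}

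Hence $\Delta_\sigma(\lambda,\mu)=0$, so $\sigma(\mu)=r$ and $\mu\in V_{n,r}$. The edge $\lambda\mu$ therefore lies in the induced subgraph $G_n^{[r]}$.

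The only point needing care is that both hypotheses are genuinely used. The assumption $t\ge 1$, i.e. $n>T_r$, makes the new size $r+t-1$ land outside $S$; for $t=0$ it would equal $r-1$ and the support would drop. The assumption $r\ge 2$ excludes the collision case and also guarantees $x-1\ge 1$.

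One can also confirm the conclusion directly, without the formula. The parts of $\mu$ have the distinct values
\[
r+t-1,\; r-1,\; \dots,\; 1,
\]
of which there are exactly $r$, so $\sigma(\mu)=r$.
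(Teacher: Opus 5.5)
Your proposal is correct and uses exactly the paper's edge $(r+t,r-1,\dots,2,1)\sim(r+t-1,r-1,\dots,2,1,1)$, obtained by moving one cell from the largest part to a new row. Your term-by-term check via the support-change theorem, together with the direct count of distinct parts, simply spells out the membership $\mu\in V_{n,r}$ that the paper asserts without further detail.
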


\begin{proof}
Write $n=T_r+t$ with $t\ge 1$. Consider
\[
\lambda=(r+t,r-1,r-2,\dots,2,1)
\]
and
\[
\mu=(r+t-1,r-1,r-2,\dots,2,1,1).
\]
Both belong to $V_{n,r}$, and $\mu$ is obtained from $\lambda$ by moving one cell from the largest part to a new row.
\end{proof}

At $n=T_r$, this cannot happen because $V_{n,r}$ is then the singleton staircase partition.

\subsection{Consecutive support levels are always adjacent}

\begin{proposition}
Let $1\le r<\rho(n)$. Then there exists an edge of $G_n$ joining a vertex of $V_{n,r}$ to a vertex of $V_{n,r+1}$.
\end{proposition}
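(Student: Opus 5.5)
Since $r<\rho(n)$, we have $T_{r+1}\le n$. I will write down an explicit edge. Put $n=T_{r+1}+t$ with $t\ge 0$, and start from the support-maximal witness at level $r+1$:
\[
\mu=(r+1+t,\,r,\,r-1,\,\dots,\,2,\,1)\in V_{n,r+1}.
\]
The idea is to merge the part $1$ into the part $2$ by moving one cell from the smallest part to the second smallest. This kills the part $1$ and should lower the support by exactly one.

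In the language of the exact support-change theorem, the transfer takes a cell from a part of size $x=1$ to a part of size $y=2$. This needs $r+1\ge 3$, so that a part $2$ is present. The target is
\[
\lambda=(r+1+t,\,r,\,\dots,\,4,\,3,\,3),
\]
where the last two entries come from the $3$ already present (when $r\ge 3$) and the newly created $3$. I will check the jump with the theorem, taking $S=\{1,\dots,r\}\cup\{r+1+t\}$.
\begin{itemize}
\item The birth term at $x-1$ is omitted since $x=1$.
\item The birth at $y+1=3$ is absent because $3\in S$.
\item The collision term does not apply.
\item There is a death at $1$ and a death at $2$, since $m_1=m_2=1$.
\end{itemize}
That gives jump $-2$, not $-1$, so this choice fails. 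I would therefore modify it: move a cell from the part $2$ onto the part $1$, i.e.\ $x=2$, $y=1$. Then $x-1=1$ and $y+1=2$, which just swaps the two parts, giving a loop rather than an edge. So I discard that too.

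A cleaner choice is to move one cell from the largest part to a new row, as in the proof of internal edges. Take $x=r+1+t$, $y=0$.
\begin{itemize}
\item If $t\ge 1$, then $x-1\ne r$ and the birth at $x-1$ occurs; the new part $1$ is already in $S$, and the death at $x$ occurs. So the support is preserved, which is not what we want.
\end{itemize}
Instead, move a cell from the part $1$ onto the largest part, i.e.\ $x=1$, $y=r+1+t$. The death at $1$ occurs, and the death at $y$ occurs since $m_y=1$. The birth at $y+1=r+2+t$ occurs since $y+1\notin S$. The jump is $-1$, as required. The result is
\[
\lambda=(r+2+t,\,r,\,r-1,\,\dots,\,2)\in V_{n,r}.
\]
Its parts are distinct and number $r$, so membership in $V_{n,r}$ can also be checked directly without the theorem. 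This construction works for every $r\ge 1$; for $r=1$ it gives $\mu=(2+t,1)\to(3+t)$, a rectangle.

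So the proof will be: define $\mu$ and $\lambda$ as above, note that $\lambda$ arises from $\mu$ by transferring the single cell of the part $1$ to the largest part, and verify the support sizes directly. The only real obstacle is choosing the transfer so that exactly one support value is lost; the failed attempts above show that the choice matters. Losing the part $1$ while the largest part simply grows to a fresh size makes the count transparent.
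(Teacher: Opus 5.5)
Your final construction is exactly the paper's proof: the same witnesses $\mu=(r+1+t,r,\dots,2,1)\in V_{n,r+1}$ and $\lambda=(r+2+t,r,\dots,2)\in V_{n,r}$, joined by moving the single cell of the part $1$ onto the largest part. Your jump count is correct for every $r\ge 1$: two deaths (at $1$ and at $r+1+t$) and one birth (at $r+2+t$). The exploratory detours should simply be dropped from the write-up, since they contain small inaccuracies. For example, the largest-part-to-new-row move does give jump $-1$ when $t=0$. These do not affect the final argument.
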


\begin{proof}
Write $n=T_{r+1}+t$ with $t\ge 0$. Consider
\[
\mu=(r+1+t,r,r-1,\dots,2,1)\in V_{n,r+1}
\]
and
\[
\lambda=(r+2+t,r,r-1,\dots,2)\in V_{n,r}.
\]
The partition $\lambda$ is obtained from $\mu$ by moving one cell from the part $1$ to the largest part, so $\lambda\sim\mu$.
\end{proof}

\begin{corollary}
The support-level graph $\mathcal L_n^\sigma$ contains the chain
\[
1-2-\cdots-\rho(n).
\]
In particular, $\mathcal L_n^\sigma$ is connected.
\end{corollary}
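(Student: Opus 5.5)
The plan is to deduce the corollary directly from the preceding proposition on consecutive support levels, together with the definition of $\mathcal L_n^\sigma$ and the theorem identifying the realized support sizes. I would proceed in three steps.

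First, I would confirm that the vertex set $\{1,\dots,\rho(n)\}$ of $\mathcal L_n^\sigma$ consists exactly of the realized support levels. By the theorem on extremal support, $V_{n,r}\neq\varnothing$ precisely when $T_r\le n$, that is, for $1\le r\le\rho(n)$, since $T_r$ is increasing in $r$. Hence every level on the claimed chain is genuinely present.

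Second, I would fix $r$ with $1\le r<\rho(n)$ and invoke the preceding proposition. It supplies an edge $\lambda\mu\in E(G_n)$ with $\sigma(\lambda)=r$ and $\sigma(\mu)=r+1$. In that proof the two endpoints are the near-staircase $\mu=(r+1+t,r,\dots,1)$ and the partition $\lambda$ obtained by absorbing the part $1$ into the largest part. By the definition of the support-level graph, this edge makes $r$ and $r+1$ adjacent in $\mathcal L_n^\sigma$. Doing this for every $r$ gives all the edges $\{r,r+1\}$ of the path $1-2-\cdots-\rho(n)$.

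Third, connectivity follows at once, because this path spans the whole vertex set $\{1,\dots,\rho(n)\}$. The case $\rho(n)=1$, that is $n\le 2$, is trivially connected: a single vertex.

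There is no real obstacle here. The only points needing care are that the preceding proposition is applied only for $r<\rho(n)$, so that $T_{r+1}\le n$ and $t\ge0$ in its construction, and that the degenerate case $\rho(n)=1$ is handled separately.
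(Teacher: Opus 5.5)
Your proposal is correct and follows the paper's route: the corollary comes straight from the preceding proposition, which for each $1\le r<\rho(n)$ gives the edge $(r+1+t,r,\dots,1)\sim(r+2+t,r,\dots,2)$ between $V_{n,r+1}$ and $V_{n,r}$, and the resulting path spans $\{1,\dots,\rho(n)\}$. Your first step is harmless but not needed, since the vertex set $\{1,\dots,\rho(n)\}$ of $\mathcal L_n^\sigma$ is fixed by definition.
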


\subsection{Corridor language}

A path
\[
\lambda_0,\lambda_1,\dots,\lambda_m
\]
in $G_n$ is called:
\begin{enumerate}[label=\arabic*.]
\item a \emph{constant-support path} if all $\sigma(\lambda_i)$ are equal;
\item a \emph{support-monotone path} if the sequence $\sigma(\lambda_i)$ is weakly monotone;
\item a \emph{strict support-rise path} if $\sigma(\lambda_{i+1})>\sigma(\lambda_i)$ for all $i$.
\end{enumerate}

At the coarse level, the previous corollary provides a connected corridor theory on support levels. At the fine level, the connectivity of the induced graphs $G_n^{[r]}$ is subtler and belongs largely to the computational atlas.

\section{Computational atlas of support strata and support jumps}

In this section we record initial computational data for the support geometry of the partition graph. The purpose is twofold. First, the tables make the support stratification and the support-jump structure visible at small values of $n$. Second, they help separate theorem-level facts from empirical patterns that should not yet be promoted to formal statements.

All data in this section were obtained by direct enumeration of partitions of $n$, their neighbors in $G_n$, their support sizes, and the induced fixed-support subgraphs.

\subsection{Basic support-stratum counts}

Recall that
\[
a_{n,r}:=|V_{n,r}|.
\]
By Theorem~2.5, only the values $1\le r\le \rho(n)$ can occur.

The first support-stratum counts, for $1\le n\le 20$, are as follows.

\begin{center}
\small
\begin{longtable}{rccccc}
\toprule
$n$ & $a_{n,1}$ & $a_{n,2}$ & $a_{n,3}$ & $a_{n,4}$ & $a_{n,5}$\\
\midrule
\endfirsthead
\toprule
$n$ & $a_{n,1}$ & $a_{n,2}$ & $a_{n,3}$ & $a_{n,4}$ & $a_{n,5}$\\
\midrule
\endhead
1 & 1 & 0 & 0 & 0 & 0 \\
2 & 2 & 0 & 0 & 0 & 0 \\
3 & 2 & 1 & 0 & 0 & 0 \\
4 & 3 & 2 & 0 & 0 & 0 \\
5 & 2 & 5 & 0 & 0 & 0 \\
6 & 4 & 6 & 1 & 0 & 0 \\
7 & 2 & 11 & 2 & 0 & 0 \\
8 & 4 & 13 & 5 & 0 & 0 \\
9 & 3 & 17 & 10 & 0 & 0 \\
10 & 4 & 22 & 15 & 1 & 0 \\
11 & 2 & 27 & 25 & 2 & 0 \\
12 & 6 & 29 & 37 & 5 & 0 \\
13 & 2 & 37 & 52 & 10 & 0 \\
14 & 4 & 44 & 67 & 20 & 0 \\
15 & 4 & 44 & 97 & 30 & 1 \\
16 & 5 & 55 & 117 & 52 & 2 \\
17 & 2 & 59 & 154 & 77 & 5 \\
18 & 6 & 68 & 184 & 117 & 10 \\
19 & 2 & 71 & 235 & 162 & 20 \\
20 & 6 & 81 & 277 & 227 & 36 \\
\bottomrule
\end{longtable}
\end{center}

Several structural points are immediately visible.
\begin{enumerate}[label=\arabic*.]
\item The support range grows exactly as predicted by Theorem~2.5:
\[
\rho(n)=1,1,2,2,2,3,3,3,3,4,4,4,4,4,5,5,5,5,5,5
\]
for $1\le n\le 20$.
\item The lowest stratum is thin and highly arithmetic, in agreement with
\[
a_{n,1}=d(n).
\]
\item The support-maximal stratum begins as a singleton at its first occurrence $n=T_r$, and then expands rapidly.
\item In the range shown here, the bulk of the vertex set shifts rather quickly toward intermediate support values.
\end{enumerate}

\subsection{Support-jump counts on edges}

For undirected edges, define
\[
j_{n,\delta}
:=
\#\{\lambda\mu\in E(G_n): |\sigma(\lambda)-\sigma(\mu)|=\delta\},
\qquad
\delta\in\{0,1,2\}.
\]
By the edgewise jump bound, these are the only possible jump magnitudes.

The first jump counts are:

\begin{center}
\small
\begin{longtable}{rcccc}
\toprule
$n$ & $j_{n,0}$ & $j_{n,1}$ & $j_{n,2}$ & $|E(G_n)|$\\
\midrule
\endfirsthead
\toprule
$n$ & $j_{n,0}$ & $j_{n,1}$ & $j_{n,2}$ & $|E(G_n)|$\\
\midrule
\endhead
1 & 0 & 0 & 0 & 0 \\
2 & 1 & 0 & 0 & 1 \\
3 & 0 & 2 & 0 & 2 \\
4 & 1 & 4 & 0 & 5 \\
5 & 7 & 2 & 0 & 9 \\
6 & 7 & 8 & 2 & 17 \\
7 & 14 & 14 & 0 & 28 \\
8 & 19 & 26 & 2 & 47 \\
9 & 37 & 34 & 2 & 73 \\
10 & 54 & 52 & 8 & 114 \\
11 & 84 & 82 & 4 & 170 \\
12 & 119 & 118 & 16 & 253 \\
13 & 179 & 174 & 12 & 365 \\
14 & 245 & 252 & 28 & 525 \\
15 & 370 & 336 & 32 & 738 \\
16 & 491 & 486 & 56 & 1033 \\
17 & 698 & 666 & 58 & 1422 \\
18 & 940 & 900 & 108 & 1948 \\
19 & 1292 & 1226 & 116 & 2634 \\
20 & 1709 & 1650 & 186 & 3545 \\
\bottomrule
\end{longtable}
\end{center}

These data already show a fairly rich edge ecology.

\begin{enumerate}[label=\arabic*.]
\item Jump size $2$ first appears at $n=6$.
\item Support-preserving edges and support-changing edges are of comparable size from very early on.
\item In this range, jump size $1$ is the dominant nonzero jump, while jump size $2$ remains rarer but persistent.
\end{enumerate}

\subsection{The coarse support-level edge structure}

It is useful to refine the jump counts by recording how many edges run between specific support levels. Let
\[
e_{n,r,s}
:=
\#\{\lambda\mu\in E(G_n):\sigma(\lambda)=r,\ \sigma(\mu)=s\},
\qquad r\le s.
\]
Thus $e_{n,r,r}$ is the number of support-preserving edges inside $V_{n,r}$, and $e_{n,r,s}$ with $r<s$ measures the coupling between different support levels.

As a representative example, for $n=20$ the level-edge matrix is:

\[
\begin{array}{c|ccccc}
 & 1 & 2 & 3 & 4 & 5\\
\hline
1 & 0 & 4 & 6 & 0 & 0\\
2 & 4 & 41 & 214 & 76 & 0\\
3 & 6 & 214 & 606 & 980 & 104\\
4 & 0 & 76 & 980 & 942 & 452\\
5 & 0 & 0 & 104 & 452 & 120
\end{array}
\]

Several features are worth noting.
\begin{enumerate}[label=\arabic*.]
\item The matrix is concentrated near the diagonal, as expected from the bound
\[
|\Delta_\sigma|\le 2.
\]
\item The strongest inter-level interaction at $n=20$ occurs between levels $3$ and $4$, not between the lowest levels.
\item The extreme low-support level $1$ is only weakly connected to the rest of the graph, while the intermediate and upper-intermediate levels form a much denser transition zone.
\end{enumerate}

For later reference, the first occurrences of skip-two level couplings are
\[
(1,3)\text{ at }n=6,\qquad
(2,4)\text{ at }n=10,\qquad
(3,5)\text{ at }n=15,\qquad
(4,6)\text{ at }n=21.
\]
At present we record these only as computational facts.

\subsection{Fixed-support corridor data}

For each $r$, let $G_n^{[r]}$ denote the induced subgraph on $V_{n,r}$. We now record the number of connected components in the first few fixed-support strata.

\begin{center}
\small
\begin{longtable}{rcccccc}
\toprule
$n$ & comp$(G_n^{[1]})$ & comp$(G_n^{[2]})$ & comp$(G_n^{[3]})$ & comp$(G_n^{[4]})$ & comp$(G_n^{[5]})$ & comp$(G_n^{[6]})$\\
\midrule
\endfirsthead
\toprule
$n$ & comp$(G_n^{[1]})$ & comp$(G_n^{[2]})$ & comp$(G_n^{[3]})$ & comp$(G_n^{[4]})$ & comp$(G_n^{[5]})$ & comp$(G_n^{[6]})$\\
\midrule
\endhead
1 & 1 & 0 & 0 & 0 & 0 & 0 \\
2 & 1 & 0 & 0 & 0 & 0 & 0 \\
3 & 2 & 1 & 0 & 0 & 0 & 0 \\
4 & 3 & 1 & 0 & 0 & 0 & 0 \\
5 & 2 & 1 & 0 & 0 & 0 & 0 \\
6 & 4 & 1 & 1 & 0 & 0 & 0 \\
7 & 2 & 1 & 1 & 0 & 0 & 0 \\
8 & 4 & 2 & 1 & 0 & 0 & 0 \\
9 & 3 & 3 & 1 & 0 & 0 & 0 \\
10 & 4 & 4 & 1 & 1 & 0 & 0 \\
11 & 2 & 7 & 1 & 1 & 0 & 0 \\
12 & 6 & 10 & 1 & 1 & 0 & 0 \\
13 & 2 & 11 & 1 & 1 & 0 & 0 \\
14 & 4 & 16 & 1 & 1 & 0 & 0 \\
15 & 4 & 18 & 1 & 1 & 1 & 0 \\
16 & 5 & 24 & 1 & 1 & 1 & 0 \\
17 & 2 & 24 & 1 & 1 & 1 & 0 \\
18 & 6 & 33 & 2 & 1 & 1 & 0 \\
19 & 2 & 33 & 3 & 1 & 1 & 0 \\
20 & 6 & 42 & 3 & 1 & 1 & 0 \\
21 & 4 & 42 & 5 & 1 & 1 & 1 \\
22 & 4 & 53 & 9 & 1 & 1 & 1 \\
23 & 2 & 50 & 11 & 1 & 1 & 1 \\
24 & 8 & 69 & 16 & 1 & 1 & 1 \\
25 & 3 & 57 & 23 & 1 & 1 & 1 \\
\bottomrule
\end{longtable}
\end{center}

This table suggests a sharp qualitative split between very low support and moderate or high support.

\begin{itemize}
\item The support-$1$ stratum is exceptional and almost totally disconnected, in agreement with Proposition~6.3.
\item The support-$2$ stratum fragments heavily and remains highly disconnected throughout the computed range.
\item The support-$3$ stratum is connected up to $n=17$, but disconnects from $n=18$ onward in the data.
\item The support-$4$, $5$, and $6$ strata are connected throughout the computed range in which they are nontrivial.
\end{itemize}

At present, all of these are computational observations, not theorems.

\subsection{Representative support summaries at fixed $n$}

To see how several support statistics interact at once, it is helpful to compress the data by support level at a fixed value of $n$. For $n=20$, one obtains:

\[
\begin{array}{c|ccccc}
r & |V_{20,r}| & e_{20,r,r} & \mathrm{comp}(G_{20}^{[r]}) & \min\degG & \max\degG\\
\hline
1 & 6   & 0   & 6  & 1  & 2\\
2 & 81  & 41  & 42 & 3  & 6\\
3 & 277 & 606 & 3  & 7  & 11\\
4 & 227 & 942 & 1  & 13 & 17\\
5 & 36  & 120 & 1  & 21 & 23
\end{array}
\]

This one table already illustrates three important support phenomena.

\begin{enumerate}[label=\arabic*.]
\item \emph{Degree scale rises sharply with support.}
The observed minimum degrees
\[
1,3,7,13,21
\]
are compatible with the theorem-level floor
\[
\degG(\lambda)\ge r(r-1),
\]
and in this nonminimal range are in fact very close to $r(r-1)+1$.

\item \emph{Internal corridor density improves with support.}
The low-support strata are sparse and disconnected, while the higher-support strata are both denser and more coherent.

\item \emph{The main mass of the graph sits at intermediate support.}
For $n=20$, the vertex counts are concentrated at $r=3$ and $r=4$, exactly where the strongest inter-level edge traffic also occurs.
\end{enumerate}

\subsection{First occurrences}

For convenience, we collect several first-occurrence data.
\begin{enumerate}[label=\arabic*.]
\item \emph{Support size $r$} first appears at
\[
n=T_r,
\]
as proved in Section~2.

\item \emph{A nontrivial internal edge in $V_{n,r}$} first appears at
\[
n=2 \quad\text{for }r=1,
\]
and at
\[
n=T_r+1 \quad\text{for }r\ge 2,
\]
in agreement with Proposition~6.4.

\item \emph{Jump magnitude $2$} first appears at
\[
n=6.
\]

\item \emph{The first nontrivial disconnection of $G_n^{[3]}$} appears in the computed data at
\[
n=18.
\]

\item \emph{The first computed ranges in which $G_n^{[4]}$, $G_n^{[5]}$, and $G_n^{[6]}$ are nontrivial and connected} are
\[
n\ge 10,\quad n\ge 15,\quad n\ge 22,
\]
respectively, throughout the explored range.
\end{enumerate}

Again, only items (1) and (2), together with the trivial jump bound behind (3), belong to the theorem-level structure. The rest are empirical.

\subsection{Empirical remarks and cautious conjectural directions}

The atlas suggests several patterns that appear plausible but are not yet proved.

\begin{remark}[Empirical pattern A]
Very low support is structurally exceptional. In particular, support levels $1$ and $2$ seem to behave more like sparse arithmetic fringes than like the main corridor zone of the graph.
\end{remark}

\begin{remark}[Empirical pattern B]
Intermediate and upper-intermediate support levels carry most of the internal edge density and most of the large connected support-preserving regions.
\end{remark}

\begin{remark}[Empirical pattern C]
For fixed $n$, the coarse support traffic is concentrated near the diagonal and seems to peak in a middle band rather than at the extremes.
\end{remark}

These lead naturally to the following cautious problems.

\begin{problem}
Determine, for fixed $r$, the connectivity threshold of the induced graph $G_n^{[r]}$, if such a threshold exists.
\end{problem}

\begin{problem}
Describe the asymptotic location of the bulk of the support distribution
\[
a_{n,r}=|V_{n,r}|.
\]
\end{problem}

\begin{problem}
Study whether the main support corridor zone correlates with the zones of large degree, large local simplex dimension, or rear-central thickening.
\end{problem}

\begin{problem}
Develop refined support-jump invariants beyond the scalar quantity $\Delta_\sigma$, for example set-valued and profile-valued support jumps.
\end{problem}

\section{Conclusion and open problems}

The present paper turns support into a usable global language for the partition graph. The main theorem-level points are:

\begin{enumerate}[label=\arabic*.]
\item support size $r$ occurs exactly when $T_r\le n$;
\item support jumps along edges are exactly controlled by a birth--death rule and always lie in
\[
\{-2,-1,0,1,2\};
\]
\item support gives a universal quadratic floor for degree;
\item support size is preserved by conjugation;
\item the coarse support-level graph always contains the chain
\[
1-2-\cdots-\rho(n).
\]
\end{enumerate}

At the same time, the computational atlas shows that the fine geometry of the fixed-support strata is already rich and nontrivial. In particular, very low support appears structurally exceptional, while intermediate support strata seem to exhibit much more coherent corridor behaviour.

The next natural problems are:
\begin{enumerate}[label=\arabic*.]
\item determine the precise structure and connectivity of $G_n^{[r]}$;
\item study joint distributions of $(\sigma,\degG)$, $(\sigma,\dim_{\mathrm{loc}})$, and $(\sigma,d_{\mathrm{ax}})$;
\item locate support strata relative to framework, center, and rear-central thickening;
\item refine scalar support jumps to set-valued and profile-valued jump invariants;
\item develop a genuine support-gradient language on directed or height-filtered versions of $G_n$.
\end{enumerate}

These questions belong naturally to the next stage of the project, where jump-type and gradient-type invariants will be studied in their own right.

\section*{Acknowledgements}

The author acknowledges the use of ChatGPT (OpenAI) for discussion, structural planning, and editorial assistance during the preparation of this manuscript. All mathematical statements, proofs, computations, and final wording were checked and approved by the author, who takes full responsibility for the contents of the paper.

\end{document}